\newtheorem{thm}{Theorem}[section]
\newtheorem{lem}[thm]{Lemma}
\newtheorem{cor}[thm]{Corollary}
\theoremstyle{definition}
\newtheorem{dfn}[thm]{Definition}
\newtheorem{dfns}[thm]{Definitions}
\newtheorem*{prb*}{Problem}
\newtheorem{ex}[thm]{Example}
\newtheorem{rmk}[thm]{Remark}
\newtheorem{conjecture}[thm]{Conjecture}
\newcommand{\newP}{M} 
\newcommand{\newF}{P} 
\providecommand{\emp}{\varnothing}
\providecommand{\N}{\mathbb{N}}
\providecommand{\Q}{\mathbb{Q}}
\providecommand{\R}{\mathbb{R}}
\title[Point configurations in sets of sufficient topological structure]{\centering{Point configurations in sets of sufficient topological structure and a topological {E}rd\H{o}s similarity conjecture}}
\author{Alex McDonald}
\address{Department of Mathematics, Kennesaw State University, Marietta, GA}
\email{amcdon79@kennesaw.edu}
\author{Krystal Taylor}
\address{Department of Mathematics, The Ohio State University, Columbus, OH}
\email{taylor.2952@osu.edu}
\date{}
\begin{document}

\maketitle

\begin{abstract}

We explore the occurrence of point configurations in non-meager
Baire sets. 
A celebrated result of Steinhaus asserts that $A+B$ and $A-B$ contain an interval whenever $A$ and $B$ are sets of positive Lebesgue measure in $\R^d$ for $d\geq 1$. 
A topological analogue attributed to Piccard asserts that both $AB$ and $AB^{-1}$ contain an interval when $A,B$ are non-meager Baire sets in a topological group.  
We explore
generalizations of Piccard's result to more complex point configurations and more abstract spaces. 
In the Euclidean setting, we show that if $A\subset \R^d$ is a non-meager Baire set and $\newF=\{v^i\}_{i\in\N}$ is a bounded sequence, then there is an interval of scalings $t$ for which $t\newF+z\subset A$ for some $z\in \R^d$. 
That is, the set 
$$\Delta_\newF(A)=\{t>0: \exists z\text{ such that }t\newF+z\subset A\}$$
has nonempty interior. 
More generally, if $V$ is a topological vector space and $\newF=\{v^i\}_{i\in\N}\subset V$ is a bounded sequence, we show that if $A\subset V$ is non-meager and Baire, then $\Delta_\newF(A)$ has nonempty interior. 
The notion of boundedness in this context is described below.  Note that the sequence $\newF$ can be countably infinite, which distinguishes this result from its measure-theoretic analogue. In the context of the   topological version of Erd\H{o}s' similarity conjecture, we show that bounded countable sets are universal in non-meager Baire sets. 
\end{abstract}

\section{Introduction}
A large body of research concerns the occurrence (or absence) of patterns in sets of sufficient Hausdorff dimension, see for instance, \cite{GM22,GIT, IosMag, Areas, OT}, or in sets of sufficient Fourier dimension \cite{FraserPram, CLP, Shmerkin}. Another notion of size that has proved useful in the study of patterns is that of Newhouse thickness, see for instance, \cite{McT1, McT2, YSurvey, SandbergTaylor}. 
In this article, we focus on the occurrence of \textit{infinite} point configurations within sets of sufficient topological structure. 
\vskip.12in

Our story begins with a well-known theorem of Steinhaus from the 1920s and its topological analogue, known as Piccard's theorem. 
Let $A, B\subset \R^d$ denote sets of positive measure. 
Steinhaus' theorem states that
the set $A-B$ contains a nontrivial open set (see \cite{Steinhaus} for $d=1$ and \cite{Keperman} for $d\geq 1$).
Said differently, there is an open set of shifts $t$ so that 
\begin{equation}\label{intersect2}
A\cap (B+t) \neq \emp.
\end{equation}
This assertion can be proved by observing that the convolution of the characteristic functions $\chi_A$ and $\chi_B$ is a continuous, so that the function
$$t\rightarrow \int\chi_B(t-y) \chi_A(y) dy$$
is continuous and, by Fubini, not identically zero. 
Alternatively, \eqref{intersect2} follows as a simple consequence of the Lebesgue density theorem. 
\vskip.12in

For more intricate configurations, these same proof techniques can be recycled to show that for each $k\in\N$, there exists an open ball $S_k$ about the origin so that if $\{v^i\}_{i=1}^k \subset S_k$, then 
\begin{equation}\label{intersectN}
\bigcap_{i=1}^k (A+v^i)\neq \emp.
\end{equation}
It is an immediate consequence that, if $\newF=\{v^i\}_{i=1}^k$ denotes an arbitrary finite point set, then $A$ contains all sufficiently small affine copies of $\newF$. 
That is, if
$$\Delta_\newF(A)=\{t>0: \exists z\in \R^d\ t\newF+z\subset A\},$$
then $\Delta_\newF(A)$ contains an interval with left endpoint $0$. 
\vskip.12in

There are many further generalizations and modifications of Steinhaus' theorem, including a result of Erd\H{o}s and Oxtoby showing that if $F$ on $\R\times \R$ is a continuously differentiable real-valued function with non-vanishing partial derivatives on its domain, then $F(A,B)$ has nonempty interior \cite{ErdosOxtoby}. This theorem also holds in higher dimensions, see \cite[Chapter II]{Jarai} and the references therein and \cite[Proposition 2.9 (i)]{STinterior}. 
\vskip.12in

A topological analogue of Steinhaus' theorem known as Piccard's theorem asserts that the sum of two non-meager Baire sets (see Definition \ref{Baire}) in a topological vector space contains a nonempty open set \cite{Piccard, Kominek1971}. 
Again, addition can be replaced by a two variable function satisfying minimal differentiability assumptions; see  \cite[Proposition 2.9 (iii)]{STinterior} and \cite[Chapter II]{Jarai}. 
There is also an analogous statement for topological groups, which states that $A*B$ (and $A*B^{-1}$) contains a nonempty open set provided that $A$ and $B$ are non-meager Baire sets \cite{bhaskara}. 
\vskip.12in

While the aforementioned results consider operations on pairs of points, the present article focuses on general patterns and universality. 
A set or pattern $\newF$ is universal in the collection of sets $\mathcal{X}$ if for all $A\in \mathcal{X}$, we can find an affine copy of $\newF$ inside of $A$. 
\vskip.12in

Universality can be considered for different classes of sets. 
We call a set \textbf{measure universal} if it is universal in the collection of Lebesgue measurable sets. 
Finite sets are known to be measure universal by Steinhaus' theorem.
Further, we say that $P$ is \textbf{full measure universal} if it is universal in the collection of measurable sets $A$ with the property that the compliment of $A$ has measure zero. 
Measure universality implies full measure universality. 
However, while it is easy to see that countably infinite sets are full measure universal (see Example \ref{baire category ex}), it is an open question whether there exists such a set that is measure universal. 
\vskip.12in

The classic Erd\H{o}s similarity conjecture asserts that no infinite sets can be measure universal. 
While there are several examples of bounded countably infinite sets that are not measure universal, it is an open question whether there exists a countably infinite sequence that is measure universal.  
 For a summary of results 
 on recent variants of this conjecture of Erd\H{o}s, see the survey of Jung, Lai, and Mooroogen \cite{JLM}. 
\vskip.12in

Erd\H{o}s' similarity conjecture can also be studied in a topological setting. 
Here, Baire sets are the analogue of Lebesgue measurable sets,  non-meager sets are the analogue of positive measure sets, and comeager sets are the analogue of full measure sets (see Definition \ref{Baire} and Table \ref{table}). 
\vskip.12in

Universality for the collection of comeager sets is completely  resolved, which was recently considered by Gallagher, Lai and Weber \cite{GLW}.  
Indeed, a set is universal among comeager sets if and only if it is a strong measure zero set \cite{Galvin} (also see \cite{JL}). 
Since Borel's conjecture, which states that all strong measure zero sets are countable, 
is known to be independent of ZFC \cite{sierpinski}, it follows that the existence of an uncountable set that is universal among comeager sets is independent of ZFC. 
%
 %

\vskip.12in

In this article we provide the missing link for the topological variant of the classic Erd\H{o}s similarity conjecture.  
We show that all bounded countable sets are universal in the collection of non-meager Baire sets.
Other than results inferred from the obvious implication that lack of comeager universality implies lack of non-meager universality, no previous results on non-meager universality have appeared in the literature to the authors' knowledge. 
\vskip.12in


To put this into further context, it is a straightforward consequence of the Baire category theorem that countably infinite sets are universal in the collection of comeager sets (see Example \ref{baire category ex}). 
 Our main Euclidean result (Theorem \ref{euclidean}) extends this observation to the collection of non-meager Baire sets. 
Further, we prove a more quantitative result about an interval worth of scalings, and our results extend to topological vector spaces (Theorem \ref{TVS}). 
\vskip.12in

To end, we pose the following direct topological analogue of the Erd\H{o}s similarity conjecture: 
\begin{conjecture}
    For each uncountable set $\newF$, there exists a non-meager Baire set $A$ that contains no affine image of $\newF$. 
\end{conjecture}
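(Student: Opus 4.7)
The plan is to construct, for a given uncountable $\newF \subset \R^d$, a non-meager Baire set $A$ containing no affine image of $\newF$. Dualizing, this amounts to producing a set $B = \R^d \setminus A$ that meets every affine copy $t\newF+z$ while the complement $A$ remains non-meager and Baire. It is natural to try for $A$ comeager, so that $B$ is meager; the requirement then becomes: a meager $B \subset \R^d$ with $B - t\newF = \R^d$ for every $t>0$. If $\newF$ has non-empty interior, $B = \Q^d$ already works, because a countable dense set meets every non-empty open set, so the real interest is when $\newF$ is thin (for instance nowhere dense), and the dual question of whether a meager set can be a ``transversal'' for the family $\{t\newF+z\}$ becomes genuinely nontrivial.

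My main attempt would be a transfinite recursion of length $2^{\aleph_0}$. Enumerate the affine copies $\{t_\alpha\newF+z_\alpha\}_{\alpha<2^{\aleph_0}}$ and fix in advance a scheme for building an $F_\sigma$ meager set $B = \bigcup_n N_n$ as an increasing union of closed nowhere-dense pieces $N_n$. At stage $\alpha$ I would choose a point $x_\alpha \in t_\alpha\newF+z_\alpha$ and assign it to some $N_{n(\alpha)}$, maintaining two invariants: (i) each $N_n$ remains nowhere dense at every stage, and (ii) every affine copy receives at least one point. Condition (ii) is immediate from the enumeration; condition (i) is the delicate one, and would force each $x_\alpha$ to be chosen away from the closures of the finitely or countably many points already placed in $N_{n(\alpha)}$. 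The uncountability of $\newF$ is what provides enough freedom to pick such $x_\alpha$, while some combinatorial bookkeeping distributes points among the $N_n$'s at the right rate.

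The main obstacle is precisely the coordination in step (i): countably many nowhere-dense pieces must absorb selectors from continuum-many affine copies, and since the copies vary continuously in $(t,z)$, a single $N_n$ ends up receiving a parametrically large family of points whose closure must still have empty interior. For $\newF$ of Bernstein type---meeting every perfect set but avoiding prescribed countable sets---neither density arguments (as with $\Q^d$) nor thickness or measure substitutes apply, and it is unclear to me that the recursion can be completed for such $\newF$ in ZFC alone. Since the introduction notes that the comeager analogue of this conjecture is independent of ZFC, I would seriously investigate whether the present non-meager version is also set-theoretically sensitive; as a concrete first step I would settle the case of a perfect $\newF$ (e.g.\ a standard Cantor set) by a direct nowhere-dense construction before tackling the general uncountable case.
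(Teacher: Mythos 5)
The statement you are addressing is posed in the paper as an open \emph{conjecture}; the paper offers no proof of it, so there is nothing to compare your argument against except the surrounding discussion. Your proposal is, by your own admission, not a completed proof, but the more serious issue is that its central reduction is provably (at least consistently) a dead end. You propose to take $A$ comeager, i.e.\ to find a meager $B=\R^d\setminus A$ meeting every affine copy of $\newF$. But the paper already records (via Galvin's theorem, cited in the introduction) that a set is universal in the collection of comeager sets if and only if it is a strong measure zero set, and that under CH there exist uncountable strong measure zero sets (e.g.\ Luzin sets). For such an $\newF$, \emph{every} comeager set contains an affine copy of $\newF$, so no choice of comeager $A$ can witness the conjecture, and your transfinite recursion cannot succeed for these $\newF$ no matter how the bookkeeping is arranged. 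This is exactly why the conjecture is interesting: the only prior results come from the implication ``not comeager-universal $\Rightarrow$ not non-meager-universal,'' and a ZFC proof of the full conjecture must produce non-meager Baire sets $A$ that are \emph{not} comeager (so that $B$ is non-meager as well), for which your density/transversal framework gives no leverage.

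A secondary gap is in the recursion itself, which you partially flag: you must distribute continuum many selector points among countably many closed nowhere dense sets $N_n$, so some $N_n$ absorbs continuum many points chosen from a family of affine copies varying continuously in $(t,z)$, and you give no mechanism forcing its closure to have empty interior. Your suggestion to first settle the case of a perfect $\newF$ such as a Cantor set is reasonable, but note that this case is already resolved in the paper's Example \ref{GLWexample} by the theorem of Gallagher, Lai, and Weber; the genuinely open territory is precisely the thin uncountable sets (strong measure zero, Luzin-type) where the comeager reduction fails.
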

\vskip.12in

\subsection{Background and definitions}
 Before stating our main results, we give some definitions and background results. 
\vskip.12in

\begin{dfns}\label{Baire}
Let $X$ denote a topological space.  A subset of $X$ is \textbf{meager} (first category) if it is a countable union of nowhere dense sets.  A set is \textbf{non-meager} (second category) if it is not meager, and it is \textbf{comeager} if its complement is meager. 
Finally, a set $A\subset X$ is said to be \textbf{Baire} if there is an open set $U$ and a meager set $\newP$ so that $A$ equals the symmetric difference $A= U\Delta \newP$.  
\end{dfns}

An example of a set that is non-meager but not comeager is $[0,1]$; more generally, any set with both nonempty interior and exterior is a non-meager set which is not comeager.

\begin{rmk}
Equivalently, one may define Baire sets as sets $A$ for which there exists an open set $U$ such that $U\Delta A$ is meager.  If $A$ satisfies this condition, then $A$ satisfies Definition \ref{Baire} with $P=U\Delta A$.  
Conversely, if $U$ satisfies Definition \ref{Baire}, then $A\Delta U=\newP$ is meager.  See Theorem \ref{equivalentdefinitions} for other useful equivalent definitions.
\end{rmk}

\bigskip
\begin{table}[h]\label{table}
\begin{tabularx}{0.8\textwidth} { 
  | >{\raggedright\arraybackslash}X 
  | >{\centering\arraybackslash}X 
  | >{\raggedleft\arraybackslash}X | }
 \hline
 \textbf{Measure notions} & \textbf{Topological notions} \\ 
 \hline
sets of full measure & comeager sets   \\
 \hline
sets of positive measure  & non-meager sets   \\
 \hline
measurable sets  & Baire sets   \\
 \hline
measure zero sets & meager sets\\
\hline
\end{tabularx}
\end{table}
\bigskip

Morally, one may think of meager sets as the topological analogues of measure zero sets (in the sense that they are considered negligible and are stable under countable unions), and of Baire sets as the topological analogues of measurable sets (in the sense that they are ``almost'' equal to open sets up to negligible error). A word of caution: even meager sets can have full measure. 
A succinct account of basic facts and examples is given in Appendix \ref{Appendix}; for a more thorough introduction, see \cite[Chapter I.8]{Kechris}. 
\vskip.12in

\begin{thm}[Piccard - Euclidean setting]
\label{picard}
If $A,B\subset \R^d$ are non-meager Baire sets, then both $A+B$ and $A-B$ have nonempty interior.
\end{thm}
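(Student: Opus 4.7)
The plan is to leverage the Baire property of $A$ and $B$ to reduce the problem to one about open balls modulo meager error, then invoke the Baire category theorem to solve $t = a + b$ for every $t$ in a suitable open set.

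First, I would use the definition of Baire to write $A = U_A \Delta \newP_A$ and $B = U_B \Delta \newP_B$, with $U_A, U_B$ open and $\newP_A, \newP_B$ meager. Non-meagerness of $A$ and $B$ forces $U_A, U_B$ to be nonempty, since otherwise $A$ or $B$ would itself be meager. Fix open balls $\cB_1 = B(x_0, r) \subset U_A$ and $\cB_2 = B(y_0, s) \subset U_B$. A short set-theoretic check, using that any point of $U_A$ lying outside $A$ must lie in $\newP_A$, shows $\cB_1 \setminus A \subset \newP_A$ and $\cB_2 \setminus B \subset \newP_B$, both meager.

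Now fix any $t \in \R^d$ with $|t - (x_0 + y_0)| < r + s$, which guarantees that $W_t := \cB_1 \cap (t - \cB_2)$ is a nonempty open set (it is the intersection of two open balls whose centers are within $r + s$ of each other). The two observations I need are that $W_t \setminus A$ is meager (since $W_t \subset \cB_1$) and that $W_t \setminus (t - B)$ is meager (being contained in $t - (\cB_2 \setminus B)$, a translate of a meager set, and translation is a homeomorphism). Since $\R^d$ is a Baire space, the nonempty open set $W_t$ is non-meager, so removing two meager sets cannot exhaust it:
\[
W_t \cap A \cap (t - B) \neq \emp.
\]
Any $a$ in this intersection yields $a \in A$ and $t - a \in B$, hence $t = a + (t - a) \in A + B$.

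This shows that $A + B$ contains the open ball of radius $r + s$ around $x_0 + y_0$. For $A - B$, the same argument applies with $-B$ in place of $B$, since $-B$ is also a non-meager Baire set. The main conceptual obstacle is recognizing that the Baire category theorem is the correct tool: a nonempty open set in $\R^d$ cannot be covered by two meager sets, which is precisely what drives the argument. All remaining steps reduce to verifying that meagerness is preserved under translation and that a Baire set differs from an open set by a meager error inside the balls $\cB_1$ and $\cB_2$.
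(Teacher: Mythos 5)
Your proposal is correct and follows essentially the same route as the paper's Euclidean proof: you extract balls on which $A$ and $B$ have only meager error (the paper's Lemma \ref{topLDT}), intersect one ball with a reflected translate of the other, and invoke the Baire category theorem to see that the nonempty open intersection cannot be exhausted by two meager sets. The only cosmetic differences are that you carry out the argument directly in $\R^d$ with an explicit ball of admissible $t$, whereas the paper writes out only the case $d=1$.
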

A short proof of Theorem \ref{picard} is given in Section \ref{Euclidean} for emphasis.  However, we also give a proof of the following more general result in Section \ref{TVSsection}. 
\vskip.12in

\begin{thm}[Piccard - Abstract version]
\label{picard_gen}
Let $X$ denote a topological vector space (or a topological group).
If $A,B\subset X$ are non-meager Baire sets, then both $A+B$ and $A-B$ (both $A*B$ and $A*B^{-1}$) contain nonempty open sets. 
\end{thm}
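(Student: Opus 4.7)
The plan is to adapt the standard Pettis-style argument, which is essentially the short proof of Theorem~\ref{picard} promised in Section~\ref{Euclidean}. The structural input is that every non-meager Baire set is, up to a meager error, a non-empty open set. Concretely, write $A = U \Delta M_A$ and $B = V \Delta M_B$ with $U,V \subseteq X$ open and $M_A, M_B$ meager. Since $A$ and $B$ are non-meager and meagerness is inherited by subsets, neither $U$ nor $V$ can be empty, for otherwise $A$ or $B$ would be contained in a meager set.

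For the additive statement, I would show that $A+B$ contains an open neighborhood of any point $z_0 = a_0 + b_0$ with $a_0 \in U \setminus M_A \subseteq A$ and $b_0 \in V \setminus M_B \subseteq B$. Such points exist because $U$ and $V$ are non-empty open and hence non-meager in the ambient Baire space, while $M_A$ and $M_B$ are meager. For $z$ sufficiently close to $z_0$, continuity of subtraction together with openness of $V$ gives $z - a_0 \in V$, so $a_0 \in U \cap (z - V)$, making this intersection a non-empty open set. Being non-meager, it cannot be contained in the meager set $M_A \cup (z - M_B)$; any witness $x$ in the complement satisfies $x \in U \setminus M_A \subseteq A$ and $z - x \in V \setminus M_B \subseteq B$, so $z = x + (z-x) \in A+B$. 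The case of $A - B$ is completely parallel: pick $z_0 = a_0 - b_0$, consider $U \cap (z + V)$ for $z$ near $z_0$, and extract a witness $x$ with $x \in A$ and $x - z \in B$.

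For the topological group statement, the same argument runs in multiplicative notation, with $U \cap zV^{-1}$ (respectively $U \cap zV$) playing the role of $U \cap (z - V)$ (respectively $U \cap (z+V)$). Continuity of multiplication and inversion makes these intersections non-empty open sets for $z$ near the chosen base point, and translates or inverses of meager sets remain meager because left multiplication, right multiplication, and inversion are homeomorphisms. The only genuinely delicate point in the whole argument is that one needs non-empty open sets to be non-meager, i.e., $X$ must be a Baire space; this is implicit in the Piccard-type setting and holds in the intended applications (Banach spaces, locally compact groups, and so on). No individual step looks difficult; the main bookkeeping challenge is tracking which translates of the meager sets $M_A, M_B$ appear and verifying that they remain meager, which is immediate once one recalls that the group operations are homeomorphisms.
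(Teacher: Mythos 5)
Your argument is correct and essentially the same as the paper's: both decompose $A$ and $B$ into open sets modulo meager error and observe that $U\cap(z-V)$ (respectively $U_1\cap(t+U_2)$ in the paper's notation) is a nonempty open, hence non-meager, set that cannot be contained in the union of the translated meager error terms, yielding a witness for $z\in A+B$. The one point you defer --- that nonempty open sets are non-meager --- need not be assumed of $X$: since $A$ is non-meager, $X$ itself is non-meager, and the Banach category theorem (see Lemma \ref{bairespace}) then forces every nonempty open subset of $X$ to be non-meager, which is exactly how the paper closes this loop without restricting to Banach spaces or locally compact groups.
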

\vskip.12in

The definition of $\Delta_\newF(A)$ given above immediately extends to vector spaces:
\vskip.12in

\begin{dfn}
Let $V$ be a real vector space.  Given sets $A,\newF\subset V$, define 
\[
\Delta_\newF(A)=\{t\in\R: \exists z\in V\ t\newF+z\subset A\}.
\]
\end{dfn}
\vskip.12in

In the notation above, a consequence of Piccard's Theorem is that $\Delta_\newF(A)$ has nonempty interior whenever $A$ is a non-meager Baire set and $\newF$ is a two-point set.  In the following sections, we explore generalizations of this result to more complex point configurations and more abstract spaces. To make sense of the set $\Delta_\newF(A)$, we need both translations and scalings, and for this reason we work in topological vector spaces. 
\vskip.12in

\subsection{Acknowledgment}
K.T. is supported in part by the Simons Foundation Grant GR137264. The authors thank Angel Cruz, Yeonwook Jung, Yuveshen Mooroogen, and the anonymous referee for interesting discussions and suggestions related to this article. 

\section{Results in Euclidean space}\label{Euclidean}
Our first main result is the following generalization of Piccard's Theorem.
\begin{thm}
\label{euclidean}
Let $A\subset \R^d$ be a non-meager Baire set, and let $\newF=\{v^i\}_{i\in\N}$ be a bounded sequence.  Then, $\Delta_\newF(A)$ has nonempty interior.
\end{thm}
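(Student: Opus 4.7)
The plan is to leverage the structure theorem for Baire sets together with the countable stability of meagerness. This latter property, which has no direct measure-theoretic analogue, is precisely what allows a countably infinite pattern to be accommodated.

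Since $A$ is Baire, write $A = U \Delta \newP$ with $U$ open and $\newP$ meager. Since $A$ is non-meager, $U$ must be nonempty (otherwise $A \subset \newP$ would be meager), so I fix an open ball $B(z_0, r) \subset U$; note that $U \setminus \newP \subset A$. Because $\newF = \{v^i\}_{i \in \N}$ is bounded, fix $R$ with $\|v^i\| \leq R$ for every $i$.

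The main claim is that $(0, r/(2R)) \subset \Delta_\newF(A)$. Fix such a $t$ and seek $z \in B(z_0, r/2)$ satisfying $t v^i + z \in U \setminus \newP$ for every $i$. The containment in $U$ is automatic, since $\|tv^i + z - z_0\| \leq tR + r/2 < r$, so $tv^i + z \in B(z_0, r) \subset U$. To avoid $\newP$, we need $z \notin \newP - tv^i$ for every $i$. Since translation is a homeomorphism of $\R^d$, each set $\newP - tv^i$ is meager, and hence $E_t := \bigcup_{i \in \N}(\newP - tv^i)$ is a countable union of meager sets and therefore meager. The open ball $B(z_0, r/2)$ is non-meager in $\R^d$, so $B(z_0, r/2) \setminus E_t$ is nonempty, furnishing the desired $z$.

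There is no serious obstacle beyond assembling these ingredients in the right order. The crucial points are (i) the Baire structure theorem as a topological replacement for the Lebesgue density theorem, (ii) the invariance of meagerness under translation together with its closure under countable union, and (iii) the uniform bound on $\|v^i\|$, which allows a single interval of scalings to work for every element of the sequence simultaneously. The same scheme should transfer to the topological vector space setting of Theorem \ref{TVS}, with $B(z_0, r/2)$ replaced by a suitable balanced neighborhood of a point in $U$; the notion of boundedness for $\newF$ in that context should be exactly what guarantees such a neighborhood exists.
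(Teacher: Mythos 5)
Your proposal is correct and follows essentially the same route as the paper: approximate $A$ by an open ball up to meager error (the paper's Lemma \ref{topLDT}), use translation-invariance and countable stability of meagerness to discard $\bigcup_i(\newP - tv^i)$, and use boundedness of $\newF$ to get a single interval of scalings for which all the translated balls share a common nonempty open piece. The only cosmetic difference is that you make the interval $(0, r/(2R))$ explicit where the paper argues with the intersection $\widetilde{B}_t = \bigcap_i B_{i,t}$ having nonempty interior for small $t$.
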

The necessity of the hypotheses are exemplified in Section \ref{Appendix}.
The proof of Theorem \ref{euclidean} is a topological variant of the classic proof of Steinhaus's Theorem.  The main idea of Steinhaus's Theorem via the Lebesgue density theorem is as follows.  If $A\subset \R^d$ is a measurable set, then a ball $B$ can be chosen so that
$A$ takes up $99\%$ of the space of $B$.
This means $A$ is guaranteed to intersect small translations of itself, which is the crux of the proof of the theorem. 
This approach works for finite configurations; as there is finite loss of measure with each intersection. 
If $A$ is instead a non-meager Baire set, we may still approximate $A$ by a ball in the following sense. The following is immediate from the definition of a Baire set.

\begin{lem}[Topological variant of Lebesgue Density Theorem]
\label{topLDT}
Let $A\subset \R^d$ be a non-meager Baire set.  There exists an open ball $B$ such that $B\setminus A$ is meager.
\end{lem}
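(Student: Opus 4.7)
The plan is to unpack the definition of a Baire set directly. By Definition~\ref{Baire}, there exist an open set $U$ and a meager set $\newP$ with $A = U \,\Delta\, \newP = (U\setminus \newP) \cup (\newP\setminus U)$.

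My first step is to argue that $U$ is nonempty. If instead $U = \emptyset$, then $A = \newP$, which is meager, contradicting the hypothesis that $A$ is non-meager. Since $U$ is a nonempty open set in $\R^d$, it contains some open ball $B$, and this $B$ will be my candidate.

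Next, I compute $B \setminus A$. Because $B \subset U$, any point $x \in B \setminus A$ lies in $U$ but not in $U \,\Delta\, \newP$, which forces $x \in U \cap \newP$. Hence
\[
B \setminus A \;\subset\; U \setminus A \;=\; U \cap \newP \;\subset\; \newP.
\]
Since subsets of meager sets are meager, $B \setminus A$ is meager, completing the argument.

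There is no real obstacle here; the only substantive step is recognizing that the non-meagerness of $A$ forces the open piece $U$ in the decomposition to be nonempty, after which the set-theoretic identity $U \setminus (U\,\Delta\,\newP) = U \cap \newP$ delivers the conclusion immediately.
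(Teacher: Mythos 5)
Your proof is correct and follows essentially the same route as the paper's: extract the decomposition $A=U\Delta \newP$, note that non-meagerness of $A$ forces $U\neq\emptyset$, take a ball $B\subset U$, and observe $B\setminus A\subset U\setminus A\subset \newP$. The only cosmetic difference is that you identify $U\setminus A$ exactly as $U\cap\newP$, whereas the paper just bounds it by $A\Delta U$; both give the same conclusion.
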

\begin{proof}
Since $A$ is Baire, there is an open set $U$ such that $A\Delta U$ is meager (since $A=U\Delta \newP$ implies $A\Delta U\subset \newP$).  Since $A$ is non-meager, $U$ must be nonempty.  Let $B$ be any open ball contained in $U$.  Since
\[
(B\setminus A)\subset (U\setminus A)\subset A\Delta U,
\]
the set $B\setminus A$ is meager.
\end{proof}
Recall that in the Baire Category setting, we think of meager sets as the analogues of measure-zero sets in measure theory; that is, this is the relevant notion of a set being negligible, and this notion is stable under countable unions.  However, the Lebesgue Density Theorem only guarantees that a ball can approximate a measurable set with (say) a $1\%$ error.  On the other hand, Lemma \ref{topLDT} says that a ball may approximate $A$ with meager error.  This stronger approximation lemma is the reason we can find countable configurations, rather than just finite ones.

Before we prove our main theorem, we give a short proof of Piccard's Theorem  using Lemma \ref{topLDT} in the case $d=1$ (the proof when $d\geq 2$ is a simple exercise). Theorem \ref{picard} of course follows from Theorem \ref{picard_gen}, but we give the proof here anyways explicitly in the Euclidean setting. 

\begin{proof}[Proof of Theorem \ref{picard} (one dimensional case)]
For $t\in \R$, define $B_t:=t-B$.  We have $t\in A+B$ if and only if $A\cap B_t \neq \emp$.  Apply Lemma \ref{topLDT} to get non-degenerate intervals $I,J$ such that $I\setminus A$ and $J\setminus B$ are meager, and let $J_t=t-J$.  For any $t$, the set $J_t\setminus B_t$ must be meager also.  Therefore, the set
\[
(I\cap J_t)\setminus (A\cap B_t)\subset (I\setminus A)\cup (J_t \setminus B_t)
\]
is meager.  If $A\cap B_t$ were meager, it would follow that $(I\cap J_t)$ is meager.  This cannot happen if $t$ is chosen so that $I\cap J_t$ is a non-degenerate interval. 
Denote the set of such $t$ by $K= \{t: \left(I\cap J_t\right)^{\circ}\neq \emptyset\},$ and observe that $K$ is a non-degenerate interval.  
Therefore, for any $t\in K$, we have $A\cap B_t\neq \emp$. 
\end{proof}
This proof idea generalizes to arbitrary countable sets without too much difficulty.  The key geometric fact we need to use is that the intersection of infinitely many balls still has nonempty interior, provided that each ball is a small translate of a single original ball.

\begin{proof}[Proof of Theorem \ref{euclidean}]
Define $A_{i,t}=A-tv^i$.  We have $\{tv^i+z:i\in\N\}\subset A$ if and only if $z\in \bigcap_{i=1}^\infty A_{i,t}$.  We must show this intersection is nonempty for an interval worth of $t$.  Apply Lemma \ref{topLDT} to obtain an open ball $B$ with $B\setminus A$ meager, and let $B_{i,t}=B-tv^i$.  It follows that $B_{i,t}\setminus A_{i,t}$ is meager for every $i,t$.  Finally, let 
\[
\widetilde{B}_t=\bigcap_{i=1}^\infty B_{i,t}.
\]
The set $\widetilde{B}_t\setminus A_{i,t}$ is meager, since $\widetilde{B}_t\subset B_{n,i}$.  Therefore,
\[
\widetilde{B}_t\setminus \left(\bigcap_{i=1}^\infty A_{i,t}\right)=\bigcup_{i=1}^\infty (\widetilde{B}_t\setminus A_{i,t})
\]
is meager.  If $\bigcap_{i=1}^\infty A_{i,t}$ were meager, then $\widetilde{B}_t$ would be meager.  However, if $t$ is sufficiently small (relative to the radius of $B$ and $\sup_i |v^i|$, which is finite by assumption), then each set $B_{i,t}$ is a small translate of the original ball $B$, and hence the intersection $\widetilde{B}_t$ has nonempty interior.  Therefore, the theorem holds when $J$ is a sufficiently small neighborhood of $0$.
\end{proof}
\section{Results in topological vector spaces}\label{TVSsection}
The purpose of this section is to generalize our result in Euclidean space (Theorem \ref{euclidean}) to the more general setting of topological vector spaces.  We start by making some definitions.  
\begin{dfn}
A \textbf{topological vector space} is a vector space $V$ with a topology such that the operations of addition and scalar multiplication are continuous as functions $V\times V\to V$ and $\R\times V\to V$, respectively.  We will denote the zero vector by a bold $\textbf{0}$, to contrast with the zero scalar $0$.
\end{dfn}

As a warm-up, we give the proof of Theorem \ref{picard_gen}. 
The following proof also works in the case that $X$ is a topological group; one only need change the notation and replace all occurrences of $A-B$ with $A*B^{-1}$. 
\begin{proof}[Proof of Theorem \ref{picard_gen}]
 We establish the claim for $A-B$. 
Let $U_1$, $U_2$ denote open sets, and let $\newP_1$, $\newP_2$ denote meager sets so that $A= U_1\Delta \newP_1 $ and $B = U_2\Delta \newP_2$. 
Let us assume that nonempty open sets are not meager, for otherwise $X$ would also be meager by the Banach category theorem (see Theorem \ref{BCT}) as a union of translates of such a set. 

We observe that 
\begin{equation}\label{key}
    A-B \supset U_1 - U_2,
\end{equation}
which will prove the claim since $U_1-U_2$ is a nonempty open set.
To show \eqref{key}, let $t\in U_1-U_2$. Observe that 
$t\in A-B$ if and only if $\left(t+B\right)\cap A \neq \emptyset.$
Since 
\begin{equation}\label{key2}
     A\cap \left(t+B \right) \supset 
(U_1 \cap (t+U_2))
\backslash (\newP_1 \cup (\newP_2+t)), 
\end{equation} and since $(U_1 \cap (t+U_2))$ is a nonempty open set, while $(\newP_1 \cup (\newP_2+t))$ is meager, it follows that the right-hand-side of \eqref{key2} is nonempty. 

 \end{proof}

Moving on, recall that in Theorem \ref{euclidean}, our hypotheses included that the configuration set under consideration was bounded.  Since an arbitrary topological vector space need not be metric, we need a definition of boundedness in this context.
\begin{dfn}\label{bounded}
    Let $V$ be a topological vector space, and let $\newF\subset V$.  We say $\newF$ is \textbf{bounded} if, for any open set $U\subset V$, there exist $t>0,z\in V$ such that $t\newF+z\subset U$.
\end{dfn}
We are now ready to state the main theorem of this section.
\begin{thm}
\label{TVS}
    Let $V$ be a topological vector space, and let $\newF=\{v^n\}_{n \in \N}\subset V$ be a bounded sequence.  If $A\subset V$ is a non-meager Baire set, then $\Delta_\newF(A)$ has nonempty interior.
\end{thm}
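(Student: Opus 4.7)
The plan is to mimic the Euclidean argument of Theorem~\ref{euclidean}, with the role of ``a fixed ball $B$'' played by a small open neighborhood of $\textbf{0}$ in $V$, and with a sliding choice of translate to compensate for the fact that Definition~\ref{bounded} is weaker than the usual von Neumann boundedness. As in the proofs of Lemma~\ref{topLDT} and Theorem~\ref{picard_gen}, I write $A = U \Delta \newP$ with $U$ open and nonempty and $\newP$ meager, so $U \setminus A$ is meager, and I follow Theorem~\ref{picard_gen} in assuming nonempty open sets are non-meager. Setting $U_{i,t} = U - tv^i$ and $A_{i,t} = A - tv^i$, each difference $U_{i,t} \setminus A_{i,t}$ is a homeomorphic image of $U \setminus A$, hence meager, so
\[
\bigcap_i U_{i,t} \,\setminus\, \bigcap_i A_{i,t} \;\subset\; \bigcup_i \bigl(U_{i,t}\setminus A_{i,t}\bigr)
\]
is meager. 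It therefore suffices to exhibit an open interval of $t$'s for which $\bigcap_i U_{i,t}$ contains a nonempty open set.

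The abstract input I need is a von Neumann-type boundedness statement for the difference set $\newF - \newF$: for every balanced open neighborhood $W$ of $\textbf{0}$ there exists $\epsilon > 0$ such that $s(v^i - v^j) \in W$ for all $i,j \in \N$ and all $|s| \le \epsilon$. To obtain this, pick an open neighborhood $W''$ of $\textbf{0}$ with $W'' - W'' \subset W$ (by continuity of subtraction), apply Definition~\ref{bounded} to $W''$ to get $s_0 > 0$ and $y_0 \in V$ with $s_0 \newF + y_0 \subset W''$, subtract pairs to conclude $s_0(\newF - \newF) \subset W'' - W'' \subset W$, and extend to all $|s| \le s_0$ using the balance of $W$: writing $s(v^i - v^j) = (s/s_0)\bigl(s_0(v^i-v^j)\bigr) \in (s/s_0)\,W \subset W$.

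The TVS analogue of ``pick an open ball $B \subset U$'' is then to fit an entire scaled translate of $\newF$ inside $U$ with a uniform open neighborhood of room around it. Pick any $p \in U$ and, by continuity of addition, a balanced open neighborhood $W$ of $\textbf{0}$ with $p + W + W \subset U$. Applying Definition~\ref{bounded} to the open set $p + W$ produces $t_0 > 0$ and $z_0 \in V$ with $t_0 \newF + z_0 \subset p + W$, so that
\[
t_0 \newF + z_0 + W \;\subset\; p + W + W \;\subset\; U,
\]
which says $z_0 + W \subset \bigcap_i U_{i, t_0}$; in particular the intersection already has nonempty interior at $t = t_0$.

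To finish, I propagate to nearby $t$ via a sliding translate: for $t$ close to $t_0$, set $z(t) = z_0 - (t - t_0)\, v^1$, so that
\[
tv^i + z(t) \;=\; t_0 v^i + z_0 + (t - t_0)(v^i - v^1).
\]
Choose a balanced open $W'$ with $W' + W' \subset W$, and apply the lemma of the second paragraph to $W'$ to get $\epsilon > 0$ such that $|t - t_0| < \epsilon$ forces $(t - t_0)(v^i - v^1) \in W'$ for every $i$. Then for every $w \in W'$,
\[
tv^i + z(t) + w \;\in\; t_0 v^i + z_0 + (W' + W') \;\subset\; t_0 v^i + z_0 + W \;\subset\; U,
\]
so $z(t) + W' \subset \bigcap_i U_{i, t}$. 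Combined with the reduction in the first paragraph, $\bigcap_i A_{i,t}$ is non-meager and in particular nonempty, so $t \in \Delta_\newF(A)$ for every $t \in (t_0 - \epsilon, t_0 + \epsilon)$, proving the theorem. The main obstacle is precisely the gap between Definition~\ref{bounded} and von Neumann boundedness of $\newF$ itself: letting $z$ slide along $v^1$ as $t$ varies repackages the uniformity needed in the perturbation step as uniformity in $\newF - \newF$, which is exactly what Definition~\ref{bounded} controls.
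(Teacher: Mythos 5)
Your proof is correct, and it shares the paper's overall skeleton (write $A=U\Delta \newP$, reduce to showing $\bigcap_i U_{i,t}$ has nonempty interior for an interval of $t$, and conclude via the Banach category theorem), but it handles the key geometric step differently from the paper's Lemma \ref{intersection}. The paper reduces to the case $\textbf{0}\in U$, takes a balanced $B$ with $B+B\subset U$, and asserts that boundedness gives $t_0\newF\subset B$, yielding the interval $(0,t_0)$ of scalings; read literally, Definition \ref{bounded} only supplies $t_0\newF+z\subset B$ for some translate $z$, and removing that $z$ requires a small extra argument (essentially that Definition \ref{bounded} implies von Neumann boundedness of $\newF$) which the paper does not spell out. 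You instead anchor at the pair $(t_0,z_0)$ that Definition \ref{bounded} literally provides, and perturb $t$ around $t_0$ with the sliding translate $z(t)=z_0-(t-t_0)v^1$, controlling the error $(t-t_0)(v^i-v^1)$ via the uniform smallness of $s(\newF-\newF)$ --- which is exactly what Definition \ref{bounded} gives after subtracting pairs, with no need to dispose of the translate. The trade-off: the paper's route (once the elided step is filled in) gives the cleaner conclusion that \emph{all} sufficiently small scalings work, i.e.\ an interval with left endpoint $0$; your route yields an interval centered at a possibly large $t_0$, but it runs verbatim from the stated definition of boundedness and makes the role of the difference set $\newF-\newF$ explicit. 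Both are complete proofs of the theorem as stated.
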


The proof of Theorem \ref{euclidean} depended on several topological properties of the space $\R^d$, and we need analogues of those properties to allow the proof to be carried out in this setting.  First and most obviously, our arguments in the Euclidean setting are based on the fact that $\R^d$ is a Baire space, which is a consequence of the Baire Category Theorem, so that nonempty open sets are non-meager. 
In the more general context of topological vector spaces, the Banach Category Theorem plays this role.
\begin{thm}[Banach Category Theorem]\label{BCT}
    In any topological space, the union of any family of meager open sets is meager.
\end{thm}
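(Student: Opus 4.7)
The plan is to pass from the given family to a pairwise disjoint subfamily via Zorn's lemma, use disjointness to show that this subfamily's union is already meager, and then invoke maximality to reduce the leftover to the boundary of an open set. Fix a family $\{U_\alpha\}_{\alpha \in I}$ of meager open sets and set $W = \bigcup_\alpha U_\alpha$; we want to show $W$ is meager.

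First I would apply Zorn's lemma to the poset $\mathcal{P}$ of all pairwise disjoint collections of nonempty open sets, each contained in some $U_\alpha$, ordered by inclusion. This $\mathcal{P}$ is nonempty (take any single nonempty $U_\alpha$, if there is one; else $W = \emp$ and we are done) and chains have upper bounds given by their unions, since any two elements of the union lie in a common member of the chain and are therefore disjoint or equal. Zorn produces a maximal element $\{V_\gamma\}_{\gamma \in \Gamma}$. Each $V_\gamma$ is open and sits inside some meager $U_\alpha$, hence is itself meager, so I would write $V_\gamma = \bigcup_{n \in \N} N_{\gamma, n}$ with each $N_{\gamma, n}$ nowhere dense and, after replacing $N_{\gamma, n}$ by $N_{\gamma, n} \cap V_\gamma$, contained in $V_\gamma$.

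The heart of the argument, and the step I expect to be the main obstacle, is to verify that $N_n := \bigcup_{\gamma} N_{\gamma, n}$ is nowhere dense for each fixed $n$. Let $G$ be an arbitrary nonempty open set. If $G$ meets no $V_\gamma$, then $G$ itself is disjoint from $N_n$, hence, being open, from $\overline{N_n}$. Otherwise choose $\gamma_0$ with $G \cap V_{\gamma_0} \neq \emp$; since $N_{\gamma_0, n}$ is nowhere dense, there is a nonempty open $G' \subset G \cap V_{\gamma_0}$ missing $\overline{N_{\gamma_0, n}}$. The pairwise disjointness of the $V_\gamma$ now forces $G' \subset V_{\gamma_0}$ to miss $N_{\gamma, n} \subset V_\gamma$ for every $\gamma \neq \gamma_0$, so $G'$ is an open subset of $G$ disjoint from $N_n$ and hence from $\overline{N_n}$. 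This shows each $N_n$ is nowhere dense, so $V := \bigcup_\gamma V_\gamma = \bigcup_n N_n$ is meager.

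To finish, I would use maximality to show $V$ is dense in $W$: if some $x \in W$ admitted an open neighborhood $G$ with $G \cap V = \emp$, then picking $\alpha$ with $x \in U_\alpha$ would make $G \cap U_\alpha$ a nonempty open set, contained in $U_\alpha$ and disjoint from every $V_\gamma$, which could be adjoined to $\{V_\gamma\}$, contradicting maximality. Therefore $W \setminus V \subset \overline{V} \setminus V$, which is the topological boundary of the open set $V$ and thus nowhere dense. Consequently $W = V \cup (W \setminus V)$ is the union of a meager set and a nowhere dense set, and hence is meager.
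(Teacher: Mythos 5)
Your proof is correct. The paper does not prove this statement itself---it only cites Oxtoby's book for an exposition---and your argument is essentially the classical one found there: disjointify via Zorn's lemma, use pairwise disjointness to show the union of the $n$-th nowhere dense layers is nowhere dense, and use maximality to confine the remainder to the boundary of the open set $V$, which is nowhere dense. All the individual steps (the chain condition in Zorn, the two-case verification that $N_n$ is nowhere dense, and the density of $V$ in $W$) check out.
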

See \cite[Chapter 16]{Oxtobybook} for an exposition of this result.  As an immediate consequence, we have:
\begin{lem}
\label{bairespace}
If $V$ is a topological vector space containing a non-meager set, then any nonempty open subset of $V$ must be non-meager. 
\end{lem}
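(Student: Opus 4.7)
The plan is to argue by contradiction, using the Banach Category Theorem (Theorem \ref{BCT}) to upgrade a single hypothetical meager open set into meagerness of the entire space. Suppose, for contradiction, that $V$ contains both a non-meager set and a nonempty open meager set $U$. I will show this forces $V$ itself to be meager, which is impossible since a subset of a meager set is meager.

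The first step is to observe that in a topological vector space, translation by any fixed $v \in V$ is a homeomorphism of $V$ onto itself (its inverse is translation by $-v$, and both are continuous by definition of a TVS). Consequently, for every $v \in V$, the translate $U + v$ is open, and, because meagerness is preserved under homeomorphisms, also meager.

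Next, I would note that the family $\{U+v\}_{v\in V}$ covers $V$: fixing any $u_0 \in U$, we have $x = u_0 + (x-u_0) \in U + (x - u_0)$ for every $x \in V$. Thus $V = \bigcup_{v \in V}(U+v)$ is expressed as a (possibly uncountable) union of open meager sets. Applying Theorem \ref{BCT} to this family yields that $V$ is meager, contradicting the hypothesis that $V$ contains a non-meager set.

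The only real subtlety is that the covering used in the argument is in general uncountable, so the usual countable-stability of meagerness does not suffice. This is precisely why the Banach Category Theorem, which handles arbitrary unions of open meager sets, is the correct tool here; once it is invoked, the rest of the proof is formal.
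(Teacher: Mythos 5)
Your proof is correct and follows exactly the same route as the paper's: translate the hypothetical meager open set $U$ over all of $V$, invoke the Banach Category Theorem on the resulting cover by open meager sets, and derive the contradiction that $V$ is meager. Your version simply spells out the details (translations are homeomorphisms, the translates cover $V$) that the paper leaves implicit.
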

\begin{proof}
Let $V$ be such.  If there were a meager nonempty open subset $U$ of $V$, then $V$ would itself be meager by the Banach category theorem as the union of translates of $U$. But if $V$ were meager, then it could not contain a non-meager subset. 
\end{proof}

Next, we need a topological vector space analogue of the open balls of Euclidean space.  In Definition \ref{bounded}, if $\newF$ is a bounded set and we scale $\newF$ by $t_0$ to be in a given neighborhood of $\textbf{0}$, then there is no guarantee that we could then scale by a smaller $t<t_0$ and still guarantee this containment.  Fortunately, in any topological vector space, there is a topological base consisting of sets which are closed under scaling down.  This is captured in the following lemma.
\begin{dfn}
    Let $V$ be a topological vector space.  A set $B$ is called \textbf{balanced} if for any $t\leq 1$, we have $tB\subset B$.
\end{dfn}
\begin{lem}[\cite{TVS}, Theorem 4.5.1]
\label{balanced}
Let $V$ be a topological vector space.  There is a neighborhood base at $\textbf{0}$ consisting of balanced open sets.
\end{lem}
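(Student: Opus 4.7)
The plan is to show, for each open neighborhood $U$ of $\mathbf{0}$, how to construct a balanced open set $B$ with $\mathbf{0} \in B \subset U$. Since these $B$ form a neighborhood base by construction (they are contained in $U$), this yields the conclusion.

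The key ingredient is the continuity of scalar multiplication $\R \times V \to V$ at the point $(0, \mathbf{0})$. Since $0 \cdot \mathbf{0} = \mathbf{0} \in U$, continuity produces a real $\delta > 0$ and an open neighborhood $W$ of $\mathbf{0}$ such that $tW \subset U$ whenever $|t| < \delta$. The candidate balanced neighborhood will be
\[
B = \bigcup_{0 < |t| < \delta} tW.
\]
I would then verify three properties of $B$ in sequence. First, $B$ is open: for each fixed nonzero scalar $t$, the map $v \mapsto tv$ is a homeomorphism of $V$ (its inverse $v \mapsto t^{-1}v$ is continuous by continuity of scalar multiplication), so each $tW$ is open, and a union of open sets is open. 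Second, $B \subset U$ is immediate from the defining property of $\delta$ and $W$, and $\mathbf{0} \in B$ because $\mathbf{0} \in W$ implies $\mathbf{0} = t\mathbf{0} \in tW$ for any $0 < |t| < \delta$.

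Third, and most importantly, I would check that $B$ is balanced. For a scalar $s$ with $|s| \leq 1$, I would split into cases. If $s = 0$, then $sB = \{\mathbf{0}\} \subset B$. If $s \neq 0$, then
\[
sB = \bigcup_{0 < |t| < \delta} (st)W,
\]
and since $0 < |st| \leq |t| < \delta$, each term $(st)W$ already appears in the union defining $B$; hence $sB \subset B$.

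There is no serious obstacle here — this is a classical construction. The one point requiring care is the openness of $B$, which hinges on the fact that scaling by a nonzero scalar is a homeomorphism of $V$; without this one would only know $B$ is a union of images of an open set and might fail to conclude openness directly. All other steps are bookkeeping with the defining property of $\delta$ and $W$.
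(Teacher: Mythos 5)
Your proof is correct. Note that the paper does not prove this lemma at all --- it cites it as Theorem 4.5.1 of a textbook on topological vector spaces --- so there is no in-paper argument to compare against; your construction is the standard one. Given an open neighborhood $U$ of $\mathbf{0}$, you use continuity of scalar multiplication at $(0,\mathbf{0})$ to produce $\delta>0$ and an open $W\ni\mathbf{0}$ with $tW\subset U$ for $|t|<\delta$, and then take $B=\bigcup_{0<|t|<\delta}tW$. All three verifications go through as you describe: openness because each dilation by a nonzero scalar is a homeomorphism (you correctly exclude $t=0$ from the union, since $0\cdot W=\{\mathbf{0}\}$ is not open), containment in $U$ by the choice of $\delta$ and $W$, and balancedness because $|st|\le|t|<\delta$ reindexes each term $(st)W$ back into the union. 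This gives a balanced open neighborhood of $\mathbf{0}$ inside every neighborhood of $\mathbf{0}$, which is exactly the neighborhood-base claim, and it supplies the property the paper actually uses later (if $t_0\newF\subset B$ and $0<t<t_0$ then $t\newF\subset B$).
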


In the proof of Theorem \ref{euclidean}, the way boundedness is used is that a bounded set multiplied by a small scalar produces a set which is contained in a small ball around the origin.  We then observe that an arbitrary intersection of open sets is guaranteed to be open, provided each of the sets is a small perturbation of a fixed interval.  The following lemma will play the analogous role in this section and is a key ingredient in the proof of our main result. 
\begin{lem}
\label{intersection}
    Let $V$ be a topological vector space, let $U$ be an open set, and let $\newF$ be a bounded set.  For each $x\in \newF$, define $U_{x,t}=U-tx$.  Finally, define
    \[
    \widetilde{U}_t=\bigcap_{x\in \newF} U_{x,t}.
    \]
    There exists a non-degenerate interval $J\subset \R$ such that for any $t\in J$, the set $\widetilde{U}_t$ has nonempty interior.
\end{lem}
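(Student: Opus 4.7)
The plan is to produce, for each sufficiently small scalar $t$, an explicit nonempty open subset of $\widetilde{U}_t$ by combining continuity of addition, the scaling property of balanced neighborhoods (Lemma \ref{balanced}), and the definition of boundedness. Since the claim is trivial when $U$ is empty, I would fix some $u_0 \in U$ and pass to $U' := U - u_0$, which is an open neighborhood of $\mathbf{0}$. By continuity of addition $V\times V\to V$ at $(\mathbf{0},\mathbf{0})$ there exist open neighborhoods $W_1, W_2$ of $\mathbf{0}$ with $W_1+W_2\subset U'$, and Lemma \ref{balanced} produces a balanced open neighborhood $B$ of $\mathbf{0}$ contained in $W_1\cap W_2$; in particular $B+B\subset U'$.

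Next I would apply Definition \ref{bounded} to the open set $B$ to obtain $t_0>0$ and $z_0\in V$ with $t_0\newF+z_0\subset B$. For any real $t$ with $|t|\leq t_0$, the scalar $t/t_0$ satisfies $|t/t_0|\leq 1$, so multiplying the containment through by $t/t_0$ and using that $B$ is balanced gives
\[
t\newF + (t/t_0)z_0 \;\subset\; (t/t_0)B \;\subset\; B.
\]
This places a translate of the entire configuration $t\newF$ inside $B$, uniformly across $|t|\leq t_0$.

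Finally, I would thicken this single point to an open set. For any $w\in B$, any $x\in \newF$, and any $t$ with $|t|\leq t_0$, set $y := u_0 + (t/t_0)z_0 + w$; then
\[
y + tx \;=\; u_0 + w + \bigl(tx + (t/t_0)z_0\bigr) \;\in\; u_0 + B + B \;\subset\; u_0 + U' \;=\; U,
\]
so $y\in U - tx = U_{x,t}$ for every $x\in \newF$, which is precisely the statement $y\in \widetilde{U}_t$. Letting $w$ vary over $B$ shows the nonempty open set $u_0 + (t/t_0)z_0 + B$ is contained in $\widetilde{U}_t$, so $J=(-t_0,t_0)$ (or any non-degenerate subinterval) satisfies the claim.

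The main obstacle I expect is conceptual rather than computational: boundedness alone yields only one well-placed translate of $t\newF$ inside $U$, so to upgrade this single point of $\widetilde{U}_t$ to an open subset one must arrange a target neighborhood with room to spare (so that $B+B\subset U'$), and the balanced neighborhood lemma is what guarantees the containment survives uniformly as $t$ shrinks. The Euclidean argument hides this subtlety behind the metric; in a general topological vector space one must assemble these ingredients by hand.
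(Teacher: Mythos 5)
Your proof is correct and follows essentially the same route as the paper's: translate so that $U$ becomes a neighborhood of the origin, use continuity of addition together with Lemma \ref{balanced} to find a balanced open $B$ with $B+B\subset U'$, scale $\newF$ into $B$ via boundedness, and use balancedness to preserve that containment for all smaller $|t|$, so that a translate of $B$ sits inside $\widetilde{U}_t$. The one point where you are more careful than the paper is that you carry the translation $z_0$ from Definition \ref{bounded} through the argument (via the term $(t/t_0)z_0$), whereas the paper's proof simply writes $t_0\newF\subset B$ and suppresses it.
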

\begin{proof}
    First, we consider the case $\textbf{0}\in U$.  Since vector space addition $+:V\times V\to V$ is continuous, the pre-image set
    \[
    \{(x,y)\in V\times V: x+y\in U\} 
    \]
    is open.  Since Cartesian products of open sets are a basis for the product topology, there exist open sets $U_1,U_2\subset V$ such that $(\textbf{0},\textbf{0})\in U_1\times U_2$ and $U_1 + U_2\subset U$.  The set $U'=U_1\cap U_2\cap U$ is therefore an open neighborhood of $\textbf{0}$, contained in $U$, with the property that $U'+U'\subset U$.  By Lemma \ref{balanced}, we may take a subset $B\subset U'$ with these same properties, 
    which is also balanced.  Since $\newF$ is bounded, there exists $t_0$ such that $t_0\newF\subset B$.  Since $B$ is balanced, we therefore have $t\newF\subset B$ for any $t\in J:=(0,t_0)$.  We claim that for any such $t$, the set $\widetilde{U}_t$ contains $B$ as a subset.  Indeed, for any $z\in B, t\in J$, and $x\in \newF$, we have
    \[
    z+tx\in B+B\subset U,
    \]
    hence 
    \[
    z=(z+tx)-tx\in U-tx=U_{x,t}.
    \]
    Now, consider the general case.  For any open set $U$, we consider a translation $U'$ of $U$ that contains the origin.  From the first case, it follows that $\widetilde{U'}_t$ has a nonempty interior for an interval worth $t$.  Since $\widetilde{U}_t$ is a translate of $\widetilde{U'}_t$, the result follows.
\end{proof}
We are now ready to prove the main theorem of this section. 
For variety of exposition, rather than just present a modification of the proof of Theorem \ref{picard_gen} given at the beginning of this section, we present an alternative proof. 



\begin{proof}[Proof of Theorem \ref{TVS}]
    Define $A_{i,t}=A-tv^i$.  We have $\{tv^i+z:i\in\N\}\subset A$ if and only if $z\in \bigcap_{i=1}^\infty A_{i,t}$.  We must show that this intersection is nonempty for an interval worth of $t$.  Let $U$ be an open set, and let $\newP$ be a meager set such that $A=U\Delta \newP$; let $U_{i,t}=U-tv^i$ and $\newP_{i,t} = \newP-tv^i$.    Since any translation of a nowhere dense set is still nowhere dense, the set $\newP_{i,t}$ is meager for every $i,t$.  Define
    \[
    \widetilde{U}_t=\bigcap_{i=1}^\infty U_{i,t} \,\,\, \text{ and } \,\,\, \widetilde{\newP}_t=\bigcup_{i=1}^\infty  \newP_{i,t}.
    \]

We now observe that
$$  \bigcap_{i=1}^\infty A_{i,t} \supset  \widetilde{U}_t \backslash  \widetilde{\newP}_t.$$

    The proof is completed by arguing that $\widetilde{U}_t$ is non-meager; indeed, since $\widetilde{\newP}_t$ is meager, then it would follow that $ \bigcap_{i=1}^\infty A_{i,t}$ is nonempty. 
    By Lemma \ref{intersection}, there is a non-degenerate interval $J\subset \R$ such that for any $t\in J$, the set $\widetilde{U}_t$ has nonempty interior.  By Lemma \ref{bairespace}, since $V$ contains a non-meager set $A$, a set with nonempty interior must be non-meager.

\end{proof}

\appendix

\section{Examples and properties of Baire sets}
\label{Appendix}
The purpose of this appendix is to demystify the definition of a Baire set and describe some examples which show that the assumptions of our main theorems cannot be dropped.  First, we have the following alternate characterizations of Baire sets.
\begin{thm}[\cite{Kechris}, Proposition 8.23]
\label{equivalentdefinitions}
Let $X$ be any topological space, and let $A\subset X$.  The following are equivalent:
\begin{enumerate}[(i)]
\item $A$ is a Baire set.
\item $A=G\cup \newP$ for some $G_\delta$ set $G$ and some meager set $\newP$.
\item $A=F\setminus \newP$ for some $F_\sigma$ set $F$ and some meager set $\newP$.
\end{enumerate}
\end{thm}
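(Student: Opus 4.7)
The plan is to establish the equivalences by proving (i) $\to$ (ii), (i) $\to$ (iii), and then reducing (ii) $\to$ (i) and (iii) $\to$ (i) to the fact that the class of Baire sets is a $\sigma$-algebra containing all open sets and all meager sets. The central auxiliary fact I need is that every meager set $P$ is contained in a meager $F_\sigma$ set $P'$: write $P=\bigcup_n N_n$ with each $N_n$ nowhere dense, and take $P'=\bigcup_n \overline{N_n}$, using that the closure of a nowhere dense set is still nowhere dense. Without this upgrade, there is no way to produce the $G_\delta$ or $F_\sigma$ witnesses demanded in (ii) and (iii).

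For (i) $\to$ (ii), assume $A=U\Delta P$ with $U$ open and $P$ meager, and let $P\subset P'$ with $P'$ an $F_\sigma$ meager set. Set $G:=U\setminus P'$; this is an open set minus an $F_\sigma$, hence $G_\delta$. A direct check shows $G\subset U\setminus P\subset A$, while
\[
A\setminus G \;\subset\; (A\setminus U)\cup(U\cap P') \;\subset\; P\cup P' \;=\; P',
\]
so $A\setminus G$ is meager and $A=G\cup(A\setminus G)$ has the required form.

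For (i) $\to$ (iii), with the same setup let $F:=U\cup P'$, which is $F_\sigma$ as a union of an open set (itself $F_\sigma$) and an $F_\sigma$. One verifies $A\subset U\cup P\subset F$ and
\[
F\setminus A \;\subset\; (U\cap P)\cup(P'\setminus A) \;\subset\; P',
\]
so $F\setminus A$ is meager and $A=F\setminus(F\setminus A)$ is of the form (iii).

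Finally, for (ii) $\to$ (i) and (iii) $\to$ (i), I would invoke the fact that Baire sets form a $\sigma$-algebra containing all open sets and all meager sets. Meager sets are trivially Baire via $P=\emptyset\,\Delta\,P$. Closure under complements follows from the identity $U\Delta P$'s complement being $U^c\Delta P$, together with the observation that the boundary $\partial U$ of an open set is closed and nowhere dense (any open subset of $\overline{U}\setminus U$ would simultaneously lie in $\overline U$ and be disjoint from $U$, impossible), so $U^c=\operatorname{int}(U^c)\,\Delta\,\partial U$ is itself Baire. Closure under countable unions is similarly routine: if $A_n=U_n\Delta P_n$, then $(\bigcup_n A_n)\Delta(\bigcup_n U_n)\subset \bigcup_n P_n$ is meager. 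Granted this, every $G_\delta$ is Baire as a countable intersection of opens, every $F_\sigma$ is Baire by complementation, and any union or set-difference with a meager set stays Baire. I do not anticipate a real obstacle here; the only point that requires care is remembering the $F_\sigma$-meager envelope in the two forward directions, since without it the witnesses $G$ and $F$ cannot be produced explicitly.
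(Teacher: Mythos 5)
The paper offers no proof of this theorem---it is cited to \cite{Kechris}---so your argument stands on its own. Most of it is sound and follows the standard route: the $F_\sigma$ meager envelope $P'=\bigcup_n\overline{N_n}$ is exactly the right auxiliary fact, your (i)$\Rightarrow$(ii) is correct (with $G=U\setminus P'$ a $G_\delta$ and $A\setminus G$ meager), and the reduction of (ii)$\Rightarrow$(i) and (iii)$\Rightarrow$(i) to the fact that Baire sets form a $\sigma$-algebra containing the open and meager sets, including the complementation step via $U^c=\operatorname{int}(U^c)\,\Delta\,\partial U$ with $\partial U$ closed nowhere dense, is fine.

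The gap is in (i)$\Rightarrow$(iii). You take $F:=U\cup P'$ and assert it is $F_\sigma$ because ``an open set is itself $F_\sigma$.'' That holds in metrizable (more generally, perfectly normal) spaces but is false in an arbitrary topological space, which is the stated level of generality. For instance, in an uncountable set $X$ with the cocountable topology the closed sets are exactly the countable sets together with $X$, so every $F_\sigma$ set is either countable or all of $X$, while a nonempty proper open set is uncountable and proper; similarly $[0,\omega_1)$ is open but not $F_\sigma$ in $[0,\omega_1]$. So your witness $F$ need not be $F_\sigma$ and the step fails as written. The repair is standard and uses only what you have already established: since $A$ Baire implies $A^c$ Baire (your complementation argument), apply (i)$\Rightarrow$(ii) to $A^c$ to write $A^c=G\cup M$ with $G$ a $G_\delta$ and $M$ meager; then $A=(G\cup M)^c=G^c\setminus M$, and $G^c$ genuinely is $F_\sigma$ as the complement of a $G_\delta$. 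With that substitution the proof is complete.
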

As an immediate consequence, we have the following classes of examples of Baire sets.
\begin{cor}
\label{cor: equivalent definitions}
Let $X$ be any topological space.
\begin{enumerate}[(a)]
    \item Every meager subset of $X$ is Baire.
    \item Every $G_\delta$ or $F_\sigma$ subset of $X$ is Baire.
\end{enumerate}
\end{cor}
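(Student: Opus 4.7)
The plan is to derive both parts directly from the equivalent characterizations provided by Theorem \ref{equivalentdefinitions}, using the trivial fact that the empty set is simultaneously open, closed, $G_\delta$, $F_\sigma$, and meager. No new machinery should be required; the point of the corollary is simply to unpack what Theorem \ref{equivalentdefinitions} already says.

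For part (a), suppose $A \subset X$ is meager. I would invoke characterization (ii) of Theorem \ref{equivalentdefinitions}: writing $A = \emptyset \cup A$, the empty set is vacuously $G_\delta$ and $A$ itself serves as the meager component, so $A$ is Baire. Alternatively, one could use the original Definition \ref{Baire} directly by writing $A = \emptyset \Delta A$ with $\emptyset$ open and $A$ meager, but the characterization (ii) route fits most naturally in the flow of this appendix.

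For part (b), suppose first that $A$ is $G_\delta$. Then I would write $A = A \cup \emptyset$, with $A$ playing the role of the $G_\delta$ set $G$ in characterization (ii) and $\emptyset$ being (trivially) meager, so $A$ is Baire. If instead $A$ is $F_\sigma$, the symmetric argument using characterization (iii) applies: write $A = A \setminus \emptyset$, with $A$ playing the role of the $F_\sigma$ set $F$ and $\emptyset$ the meager set.

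There is no serious obstacle here; the only thing to be careful about is citing the correct clause of Theorem \ref{equivalentdefinitions} in each case and noting that the empty set trivially belongs to all the relevant classes. The whole corollary can be dispatched in a few lines.
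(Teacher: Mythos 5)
Your proof is correct and matches the paper's treatment: the paper states the corollary as an immediate consequence of Theorem \ref{equivalentdefinitions}, and your decompositions using the empty set (which is open, closed, $G_\delta$, $F_\sigma$, and meager) are exactly the right way to make that immediacy explicit.
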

It turns out that these examples generate all possible Baire sets.  More precisely, we have.
\begin{thm}[\cite{Kechris}, Proposition 8.22]
In any topological space, the collection of Baire sets is the smallest $\sigma$-algebra containing all open sets and all meager sets.
\end{thm}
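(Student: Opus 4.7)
The plan is to prove the theorem by establishing two inclusions. First, I would show that the collection $\cB$ of Baire sets is itself a $\sigma$-algebra containing every open set and every meager set. Second, I would show that every $\sigma$-algebra containing the open and meager sets contains $\cB$, which will give the minimality claim.

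For the first direction, I would work with the equivalent definition noted in the remark following Definition \ref{Baire}: $A$ is Baire if and only if there exists an open $U$ with $A \Delta U$ meager. That open sets are Baire (take $\newP = \emp$) and meager sets are Baire (take $U = \emp$) is immediate. For closure under countable unions, given Baire sets $\{A_n\}$ with witnessing open sets $\{U_n\}$, I would take $U = \bigcup_n U_n$ (which is open) and observe
\[
\left(\bigcup_n A_n\right) \Delta U \;\subset\; \bigcup_n \bigl(A_n \Delta U_n\bigr),
\]
which is meager as a countable union of meager sets.

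The step I expect to be the main obstacle is closure under complements, since the complement of an open set need not be open. Starting from $A \Delta U$ meager, the set identity $A^c \Delta U^c = A \Delta U$ (verified by direct inspection) shows that $A^c$ differs from the closed set $U^c$ by a meager set. To correct this, I would replace $U^c$ with its interior $V = \mathrm{int}(U^c)$; the error $U^c \setminus V$ is then contained in $\partial U$, and the crucial observation is that the boundary of an open set is nowhere dense (its interior must be empty, since every neighborhood of a boundary point meets $U$). Hence $U^c \Delta V \subset \partial U$ is meager, and the symmetric-difference inclusion
\[
A^c \Delta V \;\subset\; (A^c \Delta U^c) \cup (U^c \Delta V)
\]
exhibits $A^c$ as a Baire set with witnessing open set $V$.

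For the reverse containment, let $\mathcal{S}$ be any $\sigma$-algebra containing every open set and every meager set, and let $A = U \Delta \newP$ be a Baire set with $U$ open and $\newP$ meager. Then $U, \newP \in \mathcal{S}$, and since $\sigma$-algebras are closed under symmetric differences (writing $U \Delta \newP = (U \cup \newP) \setminus (U \cap \newP)$), we conclude $A \in \mathcal{S}$. Combining both inclusions identifies $\cB$ as the smallest $\sigma$-algebra containing the open and meager sets, as desired.
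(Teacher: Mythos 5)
Your proof is correct and complete. The paper does not prove this statement at all --- it is quoted directly from Kechris (Proposition 8.22) --- so there is no in-paper argument to compare against; your write-up is the standard proof of that proposition. You correctly identify closure under complementation as the only nontrivial step, and your resolution --- replacing the closed set $U^c$ by its interior and absorbing the discrepancy into $\partial U$, which is nowhere dense because any nonempty open subset of $\overline{U}\setminus U$ would have to meet $U$ --- is exactly the right idea; the union and minimality steps are routine and handled correctly.
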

Since the class of Borel sets is defined as the smallest $\sigma$-algebra containing all open sets, we have:
\begin{cor}
All Borel sets are Baire.

\end{cor}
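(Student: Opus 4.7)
The plan is to deduce this immediately from the preceding theorem characterizing the Baire sets as the smallest $\sigma$-algebra containing all open sets and all meager sets. The Borel $\sigma$-algebra is by definition the smallest $\sigma$-algebra containing all open sets. Since the Baire $\sigma$-algebra contains all open sets, and the Borel $\sigma$-algebra is the smallest such, we obtain the containment of Borel sets in Baire sets by minimality.

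More explicitly, I would let $\mathcal{B}$ denote the collection of Baire subsets of $X$ and $\mathcal{O}$ the collection of open subsets of $X$. By the preceding theorem, $\mathcal{B}$ is a $\sigma$-algebra with $\mathcal{O}\subset \mathcal{B}$. Since the Borel $\sigma$-algebra $\sigma(\mathcal{O})$ is by definition the intersection of all $\sigma$-algebras containing $\mathcal{O}$, it follows that $\sigma(\mathcal{O})\subset \mathcal{B}$, which is the claim.

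There is no real obstacle here; this is a one-line corollary of the preceding structural theorem. The only subtlety worth remarking on is that one should not confuse this notion of "Baire set" (sets with the Baire property, of the form $U\Delta\newP$ for $U$ open and $\newP$ meager) with the unrelated but similarly named notion of "Baire set" sometimes used in measure theory (the $\sigma$-algebra generated by compact $G_\delta$ sets). Under the definition in use here, the containment is strict in general, since for instance any meager set is Baire, whether or not it is Borel.
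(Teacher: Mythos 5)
Your proof is correct and follows exactly the paper's (implicit) argument: the Baire sets form a $\sigma$-algebra containing all open sets by the preceding theorem, so the Borel $\sigma$-algebra, being the smallest such, is contained in it by minimality. Nothing further is needed.
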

The most well-known example of a non-measurable set is also an example of a non-Baire set.  It also shows the necessity of the assumption that $A$ Baire in Theorem \ref{euclidean}.
\begin{ex}[A non-meager set $A$ for which the conclusion of Theorem \ref{euclidean} fails]
Let $\sim$ denote the equivalence relation on $\R$ given by $x\sim y$ if and only if $x-y\in\Q$.  Let $A$ be a set consisting of exactly one representative of each equivalence class.  We claim $A$ is non-meager; indeed, if $A$ were meager, then every translate of $A$ would also be meager, and thus
\[
\R=\bigcup_{q\in\Q}(A+q)
\]
would be meager (this is pointed out in \cite{Oxtobybook}, where it is used to prove directly that $A$ is not Baire).  Let $\newF=\{x,y\}\subset \R$, where $x-y\in\Q$.  By definition, for any $t\in\Q$ and $z\in\R$ we have $(tx+z)\sim (ty+z)$.  By construction, $A$ cannot contain two distinct equivalent numbers.  It follows that $\Delta_\newF(A)$ does not contain any rational number, and hence cannot have nonempty interior (in light of Theorem \ref{euclidean}, this gives an alternative proof that $A$ is not Baire).  The reader who is interested in set theory should note that the Axiom of Choice is crucial in this construction.  Indeed, there is a model of ZF without choice in which all sets are both measurable and Baire (see, for example, \cite[Theorem 26.14]{Jech})
\end{ex}

Next, we consider a simple consequence of the Baire category theorem showing that countable sets are universal in the collection of dense $G_\delta$ sets. Since every dense $G_\delta$ set is comeager, and since every comeager set contains a dense $G_\delta$ set, we see that universality in the collection of dense $G_\delta$ sets is equivalent to universality in the collection of comeager sets. 
\begin{ex}[finite and countably infinite sets are universal in the collection of comeager sets and in the collection of full measure sets ]\label{baire category ex}
If $P$ is countable and $A$ is a dense $G_\delta$ set,
then in particular, $P$ and $\R\backslash A$ are meager, and it follows that $P+\R\backslash A $ is meager.  Since $\R$ is non-meager, then $P+\R\backslash A \neq \R$, and so there is a $t\in \R$ so that $P+t\subset A$. 

A very similar argument shows that countable sets are full measure universal.  If $P$ is countable and $A$ is a set of full measure, then $P+\R\backslash A$ is countable so that $P+\R\backslash A \neq \R$, and so there is a $t\in \R$ so that $P+t\subset A$. 
\end{ex}
\vskip.12in

There is a broad literature on fractal sets which avoid given configurations, and generally, such a set must be meager.  This makes it possible to import examples which show that Theorem \ref{euclidean} (and hence the more general Theorem \ref{TVS}) fail if a set is assumed to be Baire but not necessarily non-meager.  For example, for any dimension $d$, there are well-known constructions of compact sets $A$ of dimension arbitrarily close to $d$ such that $A-A$ has empty interior (see for example \cite[Theorem 8.15]{FalconerGFS}).  In the language of this paper, this is the same as saying $\Delta_\newF(A)$ has empty interior for any set $\newF$ with at least two points.  Beyond this, we can give examples of $3$-point sets $\newF$ and Baire sets $A$ of positive Hausdorff dimension where the set $\Delta_\newF(A)$ is actually empty (rather than simply having empty interior).  To do this, we use a construction of M\'ath\'e.
\begin{thm}[\cite{mathe}]
\label{mathe}
Let $P(X_1,\dots,X_m)$ be a polynomial with rational coefficients in $dm$ variables (where each $X_i$ represents a vector variable in $\R^d$).  There exists a compact set $E\subset \R^d$ of positive Hausdorff dimension which does not contain any configuration of $m$ distinct points $x_1,\dots,x_m\in\R^d$ which lie in the zero set of $P$.
\end{thm}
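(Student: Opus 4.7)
The plan is to construct $E$ as a Cantor-like set via a tree construction inside $[0,1]^d$, exploiting the codimension of the zero set
\[
Z(P)=\{(x_1,\dots,x_m)\in\R^{dm}:P(x_1,\dots,x_m)=0\}.
\]
Assuming $P$ is not identically zero (the only interesting case), $Z(P)$ is a real algebraic variety of dimension at most $dm-1$, so its $r$-neighborhood in $\R^{dm}$ can be covered by $O(r^{1-dm})$ axis-parallel boxes of side $r$. This codimension estimate is the sole algebraic input; the remainder of the argument is combinatorial.

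First, I would fix a large integer $N$ and at scale $k$ work with the $N$-adic cubes of side $N^{-k}$ subdividing $[0,1]^d$. Recursively, I would choose nested families $\cC_1\supset\cC_2\supset\dots$ of such cubes with the defining property that for every $m$-tuple $(Q_1,\dots,Q_m)$ of pairwise distinct cubes in $\cC_k$, the product box $Q_1\times\cdots\times Q_m\subset\R^{dm}$ is disjoint from $Z(P)$. Then $E=\bigcap_k\bigcup_{Q\in\cC_k}Q$ is compact, and the disjointness condition enforced at every scale forces any $m$-tuple of distinct points of $E$ to lie outside $Z(P)$.

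The key counting step is the refinement from scale $k$ to scale $k+1$. Each cube of $\cC_k$ contains $N^d$ children, and by the covering estimate, the number of $m$-tuples of child cubes whose product box meets $Z(P)$ is at most $C N^{(dm-1)(k+1)}$. A greedy or probabilistic pruning then retains $N^{s}$ children per parent cube (for some fixed $s<d$ and $N$ chosen large enough depending on $s$) while eliminating all freshly created bad $m$-tuples. Iterating yields $|\cC_k|\gtrsim N^{sk}$, and by the standard mass distribution / Frostman approach one concludes that the resulting Cantor set has Hausdorff dimension at least $s>0$.

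The main obstacle lies in making the pruning uniform across scales: the number of bad $m$-tuples at scale $k$ grows like $N^{(dm-1)k}$, while the total stock of $m$-tuples in $\cC_k$ grows like $N^{smk}$, so one must balance $s$ against the threshold $d-1/m$ coming from the codimension of $Z(P)$ and verify that after each pruning round the required density of children survives. Executing this balance carefully, using the finiteness of $m$ and the strict codimension of $Z(P)$, is the heart of M\'ath\'e's combinatorial argument and the delicate step of the construction.
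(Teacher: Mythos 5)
The paper does not prove this statement; it imports it from M\'ath\'e \cite{mathe}, so there is no internal proof to compare against. Evaluated on its own terms, your sketch has a genuine gap, and it sits exactly where you defer to ``the heart of M\'ath\'e's combinatorial argument'': the counting does not close, and it cannot be made to close by more careful pruning, because your argument never uses the hypothesis that $P$ has \emph{rational coefficients} (nor the degree of $P$, which is what actually governs the dimension $d/n$ in M\'ath\'e's theorem). With only the covering bound $O(r^{1-dm})$ for $Z(P)$, the bookkeeping fails already for $d=1$, $m=2$: the children of $\cC_k$ number $V\approx M_kN$, the bad pairs can number $e\approx N^{k+1}$ (an algebraic curve can concentrate entirely on $\bigcup_{Q_1,Q_2\in\cC_k}Q_1\times Q_2$), and the deletion/independent-set bound yields a surviving family of size about $V^2/e=M_k^2N^{1-k}$; demanding $M_k\approx N^{sk}$ then forces $k(s-1)\geq s-1$ for all $k$, i.e.\ $s\geq 1$, which is incompatible with having pruned anything. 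The same computation in general forces $s\geq dm-1\geq d$. Your proposed threshold $s<d-1/m$ comes from comparing the \emph{number} of bad tuples to the \emph{number} of all tuples, but a small fraction of bad tuples does not produce a large independent set in an $m$-uniform hypergraph; the correct exponent is the one above, and it is fatal. Indeed, a codimension-and-counting argument of this shape would prove the statement for arbitrary smooth hypersurfaces, and that stronger statement is false: the rationality assumption is not removable.

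What the proof actually requires is an arithmetic separation (Liouville-type) input that your sketch omits. M\'ath\'e builds $E$ as a nested intersection of unions of cubes centered at points of rational grids $q_k^{-1}\Z^d$. If $P$ has rational coefficients and degree $n$, then for grid points $x_1,\dots,x_m$ with common denominator $q$, the number $P(x_1,\dots,x_m)$ is, after clearing denominators, an integer divided by roughly $q^{n}$; hence it is either exactly $0$ or of absolute value $\gtrsim q^{-n}$. Tuples where it is exactly $0$ are handled separately (by translating/perturbing the grid), and for all other tuples the Lipschitz bound on $P$ shows that an entire neighborhood of radius comparable to $q^{-n}$ around the tuple misses $Z(P)$. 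This lets one keep \emph{all} cubes of the next generation, provided the next scale is taken to be about $q^{-n}$; no hypergraph pruning is needed, and the scale jump from $q^{-1}$ to $q^{-n}$ is precisely what produces the dimension $d/n$. Without this integrality gap --- that is, for polynomials with generic transcendental coefficients or for general smooth hypersurfaces --- the set $Z(P)$ can track the construction and force the avoiding set down to dimension $0$, which is why no purely metric/combinatorial argument of the kind you outline can succeed.
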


The following demonstrates the necessity of the assumption that $A$ is non-meager in Theorem \ref{euclidean}. Further, it demonstrates that even if $A$ has positive Hausdorff dimension, the conclusion of this theorem need not hold. 
\begin{ex}[A meager set with positive Hausdorff dimension for which the conclusion of Theorem \ref{euclidean} fails]
Let $x,y,z\in\R^d$ be three distinct points such that the number
\[
\alpha:=\frac{|y-x|}{|z-x|}
\]
is algebraic.  It follows that $\alpha^2$ is also algebraic, so we may take 
\[
Q(X):=a_nX^n+a_{n-1}X^{n-1}+\cdots+a_1X+a_0
\]
to be the minimal polynomial of $\alpha^2$ (here, $X$ is a one-dimensional variable rather than a vector variable as in Theorem \ref{mathe}).  Plugging in our definition of $\alpha$, we have
\[
a_n\left(\frac{|y-x|^2}{|z-x|^2}\right)^n+a_{n-1}\left(\frac{|y-x|^2}{|z-x|^2}\right)^{n-1}+\cdots+a_1\left(\frac{|y-x|^2}{|z-x|^2}\right)+a_0=0.
\]
Finally, multiplying through by $|z-x|^{2n}$ to clear denominators gives us
\[
a_n|y-x|^{2n}+a_{n-1}|y-x|^{2n-2}|z-x|^2+\cdots+a_1|y-x|^2|z-x|^{2n-2}+a_0|z-x|^{2n}=0.
\]
Thus, the points of $\newF$ satisfy a polynomial equation with rational coefficients.  Moreover, by homogeneity and translation invariance, one can easily verify that the equation remains satisfied if $x,y,z$ are replaced by $tx+w,ty+w,tz+w$, respectively, for any $t>0$ and $w\in\R^d$.  By Theorem \ref{mathe}, it follows that there exists a compact set $A\subset \R^d$ such that $\Delta_\newF(A)=\emp$.  Such a set $A$ must have empty interior, so it is nowhere dense and hence meager.  However, because it is compact, it is still a Baire set by Corollary \ref{cor: equivalent definitions}.  This shows the non-meager assumption in our theorems is necessary.
\end{ex}

Our next example shows that the assumption that $\newF$ is countable in Theorem \ref{euclidean} is necessary. 

\begin{ex}[A non-meager Baire set for which Theorem \ref{euclidean} fails if countable is replaced by uncountable]\label{GLWexample}
As mentioned in the introduction, 
Gallagher, Lai, and Weber \cite[Theorem 1.4]{GLW} proved that Cantor sets in $\R^d$ are not universal in the collection of dense $G_\delta$ sets (equivalently, comeager sets). 
In particular, this implies that Cantor sets are not universal in the collection of non-meager Baire sets. 
\end{ex}
\vskip.12in

Our final (classic) example shows that although measure zero sets and meager sets can both be considered ``negligible'', these two notions of negligibility are not mutually consistent.
\begin{ex}[A meager set with full measure and a second category set with zero measure]
Enumerate the rationals by $q_1, q_2, \cdots$, 
and let $O(i,j)$ denote the open interval with center $q_j$ and length $\frac{1}{2^{i+j}}$. 
Set $U_i = \bigcup_{j=1}^\infty O(i,j)$, and set $D= \bigcap_{i=1}^\infty U_i$. One may verify that $D$ is a set of measure zero, while $M=\R \backslash D$ is meager (as the complement of an intersection of open dense sets; each $U_i$ contains $\Q$). In conclusion, $\R=M \bigcup D$ can be expressed as the disjoint union of a meager set with infinite measure and a comeager set (second category) with zero measure.  
\end{ex}



\bibliographystyle{plain} 
\bibliography{references_jan25}

\end{document}